\theoremstyle{plain}
\newtheorem*{mainthm}{Main Theorem}
\newtheorem{prop}{Proposition}
\newtheorem{lemma}[prop]{Lemma}
\theoremstyle{definition}
\newtheorem{remark}[prop]{Remark}
\theoremstyle{remark}
\newcommand{\ev}{\operatorname{ev}}
\newcommand{\ind}{\operatorname{ind}}
\newcommand{\muCZ}{\mu_{\operatorname{CZ}}}
\newcommand{\Symp}{\operatorname{Symp}}
\newcommand{\DD}{{\mathbb D}}
\newcommand{\RR}{{\mathbb R}}
\newcommand{\ZZ}{{\mathbb Z}}
\newcommand{\hH}{{\mathcal H}}
\newcommand{\jJ}{{\mathcal J}}
\newcommand{\mM}{{\mathcal M}}
\newcommand{\uU}{{\mathcal U}}
\newcommand{\p}{\partial}
\numberwithin{equation}{section}
\title[Open Books and Stable Hamiltonian Structures]{Open 
Book Decompositions and Stable Hamiltonian Structures}
\author{Chris Wendl}
\address{ETH Z\"urich \\ Departement Mathematik, HG G38.1 \\ 
R\"amistrasse 101 \\
8092 Z\"urich \\ 
Switzerland}
\email{wendl@math.ethz.ch}
\urladdr{http://www.math.ethz.ch/~wendl/}
\thanks{Research partially supported by an NSF Postdoctoral Fellowship
(DMS-0603500).}
\subjclass[2000]{Primary 32Q65; Secondary 57R17}
\begin{document}

\begin{abstract}
We show that every open book decomposition of a contact $3$--manifold
can be represented (up to isotopy) by a smooth $\RR$--invariant family
of pseudoholomorphic curves on its symplectization with respect to a
suitable stable Hamiltonian structure.  In the planar case, this family
survives small perturbations, and thus gives a concrete construction
of a stable finite energy foliation that has been used in various 
applications to planar contact manifolds, including the Weinstein 
conjecture \cite{ACH} and the equivalence of strong and Stein fillability
\cite{Wendl:fillable}.
\end{abstract}

\maketitle


\section{Introduction}
\label{sec:intro}

The subject of this note is a correspondence between open book
decompositions on contact manifolds and $J$--holomorphic curves in their
symplectizations.  We will assume
throughout that $(M,\xi)$ is a closed $3$--manifold with a positive,
cooriented contact structure.  An \emph{open book decomposition} of $M$
is a fibration 
$$
\pi : M \setminus B \to S^1,
$$
where $B \subset M$ is a link called the \emph{binding}, and the
fibers are called \emph{pages}: these are open surfaces whose closures
have boundary equal to~$B$.  An open book is called \emph{planar}
if the pages have genus zero, and it is said to \emph{support}
a contact structure $\xi$ if the latter can be written as $\ker\lambda$
for some contact form $\lambda$ (a \emph{Giroux form}) 
such that $d\lambda$ is positive on the
pages and $\lambda$ is positive on the binding (oriented as the boundary
of the pages).  In this case the Reeb vector field $X_\lambda$ defined by
$\lambda$ is transverse to the pages and parallel to the binding, so in
particular the binding is a union of periodic orbits.  A picture of a simple
open book on the tight $3$--sphere is shown in Figure~\ref{fig:openbook}.

\begin{figure}
\includegraphics{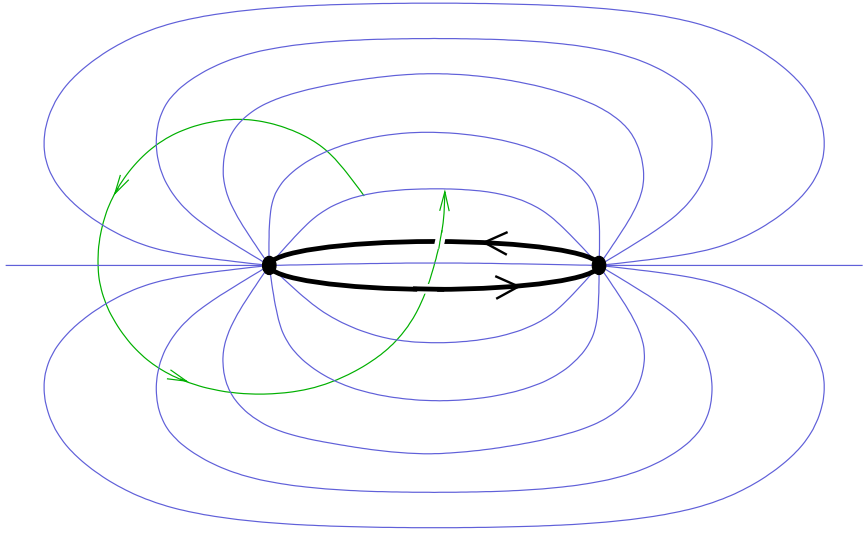}
\caption{\label{fig:openbook} 
An open book decomposition of the tight $3$--sphere with one binding orbit
and disk-like pages, which are transverse to the Reeb vector field.}
\end{figure}

We say that an almost complex
structure $J$ on $\RR\times M$ is \emph{compatible with $\lambda$} if
it is invariant under the natural $\RR$--action, maps the unit vector
in the $\RR$--direction to $X_\lambda$ and restricts to $\xi$ as a complex
structure compatible with $d\lambda|_{\xi}$.  One then
considers $J$--holomorphic curves 
$$
u : \dot{\Sigma} \to \RR \times M,
$$
where the domain is a closed Riemann surface with finitely many punctures,
and $u$ satisfies a finite energy condition (see \cite{SFTcompactness}),
so that it has ``asymptotically cylindrical'' behavior at the punctures,
approaching closed orbits of $X_\lambda$ at $\{\pm\infty\} \times M$.
Note that whenever the projection of $u$ into $M$ is embedded, it is also
transverse to $X_\lambda$, a property
that is shared by the pages of supporting open books with their Giroux forms.
Thus it is natural to ask whether the pages of an open book can in general
be presented as projections of holomorphic curves: such a family of
holomorphic curves is referred to as a \emph{holomorphic open book}, and is
a special case of a \emph{finite energy foliation}
(see \cites{HWZ:foliations,Wendl:OTfol}).
We refer to \cite{Etnyre:lectures} for further details on the rich 
relationship between open books and contact structures, and
\cite{Hofer:real} for some applications of holomorphic curves in this
context to dynamics.  

Our main goal is to prove the following.

\begin{mainthm}
Suppose $\pi : M \setminus B \to S^1$ is a planar open
book decomposition on $M$ that supports~$\xi$.  Then after an isotopy of $\pi$,
it admits a nondegenerate Giroux form $\lambda$ with a compatible
almost complex structure $J$ on the symplectization
$\RR\times M$, and a smooth $2$--dimensional $\RR$--invariant family of
embedded, finite energy $J$--holomorphic curves in $\RR\times M$ 
with index~$2$, whose projections to $M$ give an $S^1$--family of 
embeddings parametrizing the pages of~$\pi$.
\end{mainthm}

\begin{remark}
\label{remark:smallPeriods}
It will be clear from the construction that one can choose the Giroux
form $\lambda$ in this theorem so that the binding orbits have
arbitrarily small periods compared with all other Reeb orbits in~$M$.
This assumption is sometimes useful for compactness arguments, and
is exploited e.g.~in \cite{AlbersBramhamWendl}.
\end{remark}

This result has been used in the literature for various 
applications, including Abbas-Cieliebak-Hofer's
proof of the Weinstein conjecture for planar contact manifolds \cite{ACH}, 
and the author's theorem that
strong symplectic fillings of such manifolds are always blowups of 
Stein fillings
\cite{Wendl:fillable}.  A construction of holomorphic open books was sketched 
in \cite{ACH} without many details.  The construction explained below is 
based on a completely different idea,
and has the advantage of producing a (usually non-stable)
finite energy foliation out of \emph{any} open book, with arbitrary genus.
The catch is that this construction requires a choice of $J$ which is not
compatible with $\lambda$ in the sense described above, but is instead
compatible with a \emph{stable Hamiltonian structure}, which can be seen
as a limit of $\lambda$ as the contact structure degenerates to a confoliation.
The idea is then to recover the contact case by a perturbation argument, 
but for analytical reasons, this can only be done with a planar open book.

The trouble with the non-planar case is that
holomorphic curves of higher genus with the desired intersection
theoretic properties never have positive index, and thus generically
cannot exist.  This problem has an analogue in the study of closed symplectic
$4$--manifolds, namely in McDuff's classification \cite{McDuff:rationalRuled}
of manifolds that admit nonnegative symplectic spheres---there is no
corresponding result for higher genus symplectic surfaces because the 
dimension of the moduli space of higher genus holomorphic curves is
generally too small.  In the contact setting, a potential remedy was proposed
by Hofer in \cite{Hofer:real}, who
suggested considering a more general elliptic problem in which a harmonic
$1$--form is introduced to raise the index.
The study of this problem is a large project in progress by C.~Abbas
\cite{Abbas:openbook} and Abbas-Hofer-Lisi \cite{AbbasHoferLisi}, in which
punctured holomorphic curves of genus zero 
are treated as an easy special case: this would be a necessary ingredient
to generalize the approach in \cite{ACH} to the Weinstein conjecture
in dimension three.\footnote{In the mean time, Taubes \cite{Taubes:weinstein}
has produced a proof of the Weinstein conjecture in dimension three
based on Seiberg-Witten theory.  It is generally believed that a 
proof based on holomorphic curves should also be possible, but none
has yet appeared.}
For other applications however, it is already 
helpful to know that any open book can be viewed as a family of 
$J$--holomorphic curves for some non-generic choice of~$J$.
This idea is exploited for instance
in \cite{Wendl:openbook2} to compute certain algebraic invariants of
contact manifolds based on holomorphic curves, and
in \cite{Wendl:fiberSums} to define previously unknown obstructions
to symplectic filling.

Our construction rests on the notion of an \emph{abstract open book},
which is defined by the data $(P,\psi)$, where $P$ is a compact
oriented surface with boundary representing the \emph{page}, and $\psi$ is a
diffeomorphism that fixes the boundary, called the \emph{monodromy
map}.  Without loss of generality, we can assume that $\psi$ is the
identity in a neighborhood of $\p P$.
Let $P_\psi$ denote the \emph{mapping torus} of
$\psi$, which is the smooth $3$--manifold with boundary,
$$
P_\psi = (\RR \times P) / \sim
$$
where $(t+1,p) \sim (t,\psi(p))$.  This comes with a natural
fibration $P_\psi \to S^1 := \RR / \ZZ$, so that
the tangent spaces to~$P$ define a $2$--plane distribution
in~$T P_\psi$, called the \emph{vertical distribution}.

\begin{prop}
\label{prop:SHS}
Given an abstract open book $(P,\psi)$, let $M$ denote the closed
$3$--manifold obtained by gluing solid tori to $P_\psi$ so that
$(P,\psi)$ defines an open book decomposition of~$M$.  Then the
vertical distribution on $P_\psi$ can be extended to $M$ as a
confoliation $\xi_0$, such that a $C^\infty$--small perturbation of
$\xi_0$ defines a contact structure $\xi_\epsilon$ supported by the
open book, and each is compatible with stable Hamiltonian structures
$\hH_0 = (\xi_0,X_0,\omega_0)$ and $\hH_\epsilon = (\xi_\epsilon,
X_\epsilon,\omega_\epsilon)$ such that $\hH_\epsilon$ is
$C^\infty$--close to $\hH_0$.
\end{prop}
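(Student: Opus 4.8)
The plan is to work first on the mapping torus $P_\psi$, where the vertical distribution is already defined, and then extend everything over the glued-in solid tori. On $P_\psi$ I would begin by fixing a $1$--form $\lambda_0$ whose kernel is the vertical distribution: concretely, choosing an area form $\omega_P$ on $P$ that is preserved by $\psi$ (always possible by a Moser argument, since $\psi$ is area-preserving after normalization), one gets a closed $2$--form on $P_\psi$ restricting to $\omega_P$ on each page, and the coordinate $1$--form $dt$ (well-defined on the mapping torus) has the vertical distribution as kernel. Setting $\xi_0 = \ker\lambda_0$ with $\lambda_0 = dt$, $\omega_0 = \omega_P$, and $X_0 = \partial_t$ gives a candidate stable Hamiltonian structure on $P_\psi$: indeed $\omega_0$ is closed, $\omega_0|_{\xi_0}$ is nondegenerate (it is the area form on the pages), $\lambda_0(X_0) = 1$, and $\iota_{X_0}\omega_0 = 0$, so the stability conditions hold on the nose. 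Note $d\lambda_0 = 0$ here, so $\xi_0$ is a (foliation, hence) confoliation on $P_\psi$.

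The contact perturbation on $P_\psi$ is the standard confoliation-to-contact trick: replace $\lambda_0$ by $\lambda_\epsilon = \lambda_0 + \epsilon\,\mu$, where $\mu$ is chosen so that $d\lambda_\epsilon = \epsilon\, d\mu$ is positive on the pages. Since $d\lambda_0 = 0$, one needs $d\mu|_{\xi_0} > 0$; the natural choice is to take $\mu$ pulled back from the base or built from $\omega_P$ so that $d\mu = \omega_P + (\text{correction})$ — here I would use that $\omega_P$ is exact near $\partial P$ and interpolate. For small $\epsilon$, $\lambda_\epsilon \wedge d\lambda_\epsilon > 0$, so $\xi_\epsilon := \ker\lambda_\epsilon$ is contact and supported by the open book (the Reeb field is a small perturbation of $\partial_t$, hence still transverse to pages). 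The associated stable Hamiltonian structure is the genuine contact one: $\hH_\epsilon = (\xi_\epsilon, X_{\lambda_\epsilon}, d\lambda_\epsilon)$, which is manifestly $C^\infty$--close to $\hH_0$ as $\epsilon \to 0$.

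The real work is the extension over the binding. Near each boundary torus of $P_\psi$, using the assumption that $\psi = \mathrm{id}$ there, we have coordinates $(t,\theta,\rho) \in S^1 \times S^1 \times [0,\delta)$ (with $\rho$ the distance to the binding, $\rho=0$ being $\partial P$ seen from inside the solid torus, $\theta$ the page-boundary coordinate). The solid torus is glued so that the binding circle $\{\rho' = 0\}$ in the new coordinates $(\theta', \rho')$ corresponds to the $S^1$ of pages rotating around $B$; concretely the gluing swaps the roles of the $t$--circle and a meridian. I would write down $\lambda_0$, $\omega_0$, $X_0$ explicitly in a collar model: something like $\lambda_0 = f(\rho')\,d\theta' + g(\rho')\,dt$ and $\omega_0 = h(\rho')\,d\rho' \wedge d\theta' + \cdots$, choosing the profile functions $f,g,h$ to (a) match the mapping-torus data $(dt, \omega_P, \partial_t)$ at $\rho' = \delta$, (b) extend smoothly across $\rho' = 0$ (so $f(\rho') \sim \rho'^2$ etc., the usual smoothness-of-rotationally-symmetric-forms conditions), and (c) satisfy the stable Hamiltonian axioms $\omega_0(X_0,\cdot) = 0$, $\lambda_0(X_0)=1$, $d\omega_0=0$ everywhere on the solid torus. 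The vertical distribution extends to $\xi_0 = \ker\lambda_0$, and one checks $\xi_0$ is a confoliation (the contact condition $\lambda_0 \wedge d\lambda_0 \geq 0$ holds, with equality only on the region where it remains a foliation). Then the same $\lambda_\epsilon = \lambda_0 + \epsilon\mu$ perturbation, with $\mu$ extended over the solid tori so that $d\mu$ is positive on pages and $\mu$ is positive on $B$, upgrades $\xi_0$ to the supported contact structure $\xi_\epsilon$, with $\hH_\epsilon = (\xi_\epsilon, X_{\lambda_\epsilon}, d\lambda_\epsilon)$ close to $\hH_0$.

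The main obstacle I expect is step (b)–(c) in the collar: arranging a model on the solid torus that is simultaneously smooth across the core circle, matches the mapping-torus data on the overlap, and exactly satisfies the stable Hamiltonian equations (not just approximately). The constraints $\iota_{X_0}\omega_0 = 0$ and $d\omega_0 = 0$ with $X_0$ tangent to the binding force $\omega_0$ to be essentially an area form on the meridian disks pulled back appropriately, and making its profile compatible with $\omega_P$ coming from the mapping torus — while keeping $X_0$ a smooth nonvanishing vector field with closed $2$--form orthogonal complement — is the delicate bookkeeping. It is entirely a local ODE/profile-function exercise with no conceptual difficulty, but it is where all the care goes; everything else (the Moser argument for $\psi$-invariant $\omega_P$, the $\epsilon$-perturbation to contact, the $C^\infty$-closeness) is routine.
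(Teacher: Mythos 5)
There is a genuine gap, and it sits exactly at the claim you call ``manifestly'' true: that $\hH_\epsilon=(\xi_\epsilon,X_{\lambda_\epsilon},d\lambda_\epsilon)$ is $C^\infty$--close to your $\hH_0=(\xi_0,\partial_t,\omega_P)$ on the mapping torus. Write $\lambda_\epsilon=dt+\epsilon\mu$. The Reeb vector field of $\lambda_\epsilon$ is the vector field spanning $\ker d\lambda_\epsilon=\ker d\mu$ (normalized by $\lambda_\epsilon(X_\epsilon)=1$), so its \emph{direction is independent of $\epsilon$} and is governed entirely by $d\mu$. For it to converge to $\partial_t$ you would need $\iota_{\partial_t}d\mu=0$, i.e.\ $d\mu$ equal to a $t$--invariant fiberwise area form on $P_\psi$; but such a primitive exists only when the fiberwise class is exact on $P_\psi$, which requires (after the Thurston--Winkelnkemper interpolation $\mu_t=\tau(t)\psi^*\eta+(1-\tau(t))\eta$, which is forced on you because $\psi^*\eta\neq\eta$ in general) the flux class $[\psi^*\eta-\eta]\in H^1(P)$ to die in $H^2(P_\psi)$. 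For nontrivial monodromy this fails, the interpolation term $\tau'(t)\,dt\wedge(\psi^*\eta-\eta)$ survives, and $X_{\lambda_\epsilon}$ stays a bounded distance from $\partial_t$ as $\epsilon\to 0$. In addition, your taming forms do not converge either: $d\lambda_\epsilon=\epsilon\,d\mu\to 0$, not to $\omega_P$. Since the whole point of the proposition is the $C^\infty$--closeness of $\hH_\epsilon$ to $\hH_0$ (it feeds the implicit-function-theorem perturbation later), this is not a cosmetic issue.

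The fix is the one the paper uses: do not decree $X_0=\partial_t$ and $\omega_0=\omega_P$ in advance, but build $\hH_0$ out of the same $1$--form $\alpha$ (your $\mu$) used for the contact perturbation. Concretely, take $X_0$ on $P_\psi$ to be the unique vector field spanning $\ker d\alpha$ with $d\phi(X_0)=1$ (this is automatically the $C^\infty$--limit of the Reeb fields $X_\epsilon$, and it still preserves $\xi_0=\ker d\phi$ and is transverse to it), and take $\omega_0=d\alpha$ on $P_\psi$, interpolated in the collar to $d\lambda_0$ near the binding; then use this \emph{single} taming form for both $\hH_0$ and $\hH_\epsilon$, which is legitimate because taming is an open condition and $\xi_\epsilon$, $X_\epsilon$ are close to (respectively colinear with) $\xi_0$, $X_0$. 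Separately, your extension over the solid tori is only an ansatz: you correctly guess $\lambda_0=f(\rho)\,d\theta+g(\rho)\,d\phi$ with smoothness conditions at the core, but the conditions on $(f,g)$ (positivity of $fg'-f'g$, the plateau $g\equiv 1$ near the gluing region so that $X_0\equiv\partial_\phi$ there, and the matching of the $\epsilon$--perturbation $(f_\epsilon,g_\epsilon)=(\epsilon(2-\rho),1)$ with $\lambda_\epsilon=d\phi+\epsilon(2-\rho)\,d\theta$ coming from the mapping torus) are precisely where the statement is proved, and they are absent from the proposal.
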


We will prove this via a concrete construction in the next section,
after recalling precisely what 
a stable Hamiltonian structure $\hH = (\xi,X,\omega)$ is, and how it defines 
a special class of almost complex structures on $\RR\times M$.
The next step is the following:

\begin{prop}
\label{prop:J0openbook}
Given $(P,\psi)$, $M$ and $\hH_0 = (\xi_0,X_0,\omega_0)$ as in
Proposition~\ref{prop:SHS}, there exists an almost complex structure 
$J_0$ compatible with $\hH_0$ such that the pages of the 
open book on~$M$ lift to 
embedded $J_0$--holomorphic curves in $\RR\times M$, with positive ends and 
index $2 - 2g$, where $g$ is the genus of~$P$.
\end{prop}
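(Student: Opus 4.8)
The plan is to construct $J_0$ and the curves in two pieces --- over the mapping torus $P_\psi \subset M$, where each curve will be a horizontal slice over a page, and over the solid torus neighborhoods of the binding $B$, where each curve acquires a positive end --- and then to glue. Throughout we work in the explicit coordinates and with the explicit form of $\hH_0 = (\xi_0, X_0, \omega_0)$ produced in the construction of Proposition~\ref{prop:SHS}.

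Over the mapping torus, $\xi_0$ equals the vertical distribution, so $\xi_0 = TP_t$ along each page $P_t = \pi^{-1}(t)$, and $\omega_0|_{\xi_0}$ restricts to an area form on each $P_t$. Choose a smooth family of complex structures $\{j_t\}_{t\in S^1}$ on the pages with $j_t$ compatible with $\omega_0|_{P_t}$ and compatible with the gluing by $\psi$; this is possible because the spaces of compatible complex structures are contractible and $\psi = \Id$ near $\p P$, so one may take $j_t$ to be locally constant near $\p P$. Since $\omega_0|_{\xi_0}$ is nondegenerate there is then a unique $\RR$--invariant almost complex structure $J_0$ on $\RR\times P_\psi$ compatible with $\hH_0$ satisfying $J_0\p_r = X_0$ and $J_0|_{\xi_0} = j_t$. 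For this $J_0$, each slice $\{c\}\times P_t$ has tangent space $\xi_0$, which is $J_0$--invariant and on which $J_0$ acts as the honest complex structure $j_t$; hence $\{c\}\times(P_t, j_t)$ is an embedded finite energy $J_0$--holomorphic curve, and as $c\in\RR$ and $t\in S^1$ vary these sweep out the interiors of all pages, smoothly and $\RR$--invariantly. It remains to modify $J_0$ near the binding and to complete these curves there.

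Near a binding component we have coordinates $(\theta, \rho, \phi)$ on a solid torus $S^1\times\DD$ in which the binding orbit is $\{\rho = 0\}$ with $X_0 = \p_\theta$ there, the pages are the radial half--disks $\{\phi = \text{const}\}$, and $\hH_0$ has the normal form of Proposition~\ref{prop:SHS}; in particular $\xi_0$ agrees with $TP_t$ for $\rho$ near $1$ but tilts away from tangency with the pages as $\rho\to 0$, always containing the page--radial direction $\p_\rho$. The local model for the completed page is a half--cylinder $v : [0,\infty)\times S^1 \to \RR\times(S^1\times\DD)$ covering the half--disk $\{\phi = \phi_0\}$ and of the form $v(s,\tau) = \bigl(f(s), \tau, \phi_0, g(s)\bigr)$ in the coordinates $(r,\theta,\phi,\rho)$, with $g(s)\to 0$ and $f(s)\to +\infty$ as $s\to\infty$, so that after passing to a cylindrical coordinate it converges to the cylinder over the binding orbit. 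One then chooses the conformal factor of $J_0|_{\xi_0}$ near $\{\rho = 0\}$ and the profiles $f, g$ \emph{in tandem} so that: (i) $v$ solves the Cauchy--Riemann equation for $J_0$; (ii) $v$ has finite energy with a positive end asymptotic to the binding orbit, i.e.\ the $\RR$--coordinate blows up at precisely the rate dictated by the (Morse--Bott) asymptotics there; and (iii) for $\rho$ near $1$ one has $f\equiv c$ and $J_0$ coincides with the mapping torus construction, so that $v$ is there a horizontal slice. Performing this for every binding component and gluing to the slices over $P_\psi$ then produces a single smooth $\RR$--invariant $J_0$ on $\RR\times M$ and, for each page, a single embedded $J_0$--holomorphic curve with $n := \#(\text{components of }B)$ positive punctures whose projection to $M$ parametrizes that page; these form the desired smooth $\RR$--invariant family.

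Finally, the index. The domain has genus $g$ and $n$ positive punctures, so $\chi(\dot{\Sigma}) = 2-2g-n$. Trivializing $\xi_0$ along each binding orbit by the inward page--radial direction $\p_\rho$, one reads off from the slice description that $c_1^\Phi\bigl(u^*T(\RR\times M)\bigr) = \chi(\dot{\Sigma})$, while a direct computation with the local model gives $\muCZ^\Phi = 1$ for each (degenerate) binding orbit in this trivialization. The index formula then yields
\[
\operatorname{ind}(u) = -\chi(\dot{\Sigma}) + 2c_1^\Phi\bigl(u^*T(\RR\times M)\bigr) + \sum_{z\in\Gamma}\muCZ^\Phi(\gamma_z) = -\chi(\dot{\Sigma}) + 2\chi(\dot{\Sigma}) + n = 2-2g.
\]
Embeddedness is clear, since each curve is a graph over an embedded page in $M$ whose $\RR$--coordinate is constant over the mapping torus and increases strictly to $+\infty$ toward the binding. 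The main obstacle is the local model near the binding: one must choose the blow--up profile of the $\RR$--coordinate together with the near--binding complex structure so that the tilting of $\xi_0$ away from the pages forces each page to lift to an \emph{honest} finite energy $J_0$--holomorphic curve with a positive asymptotically cylindrical end over the binding orbit, while still matching the product description over $P_\psi$.
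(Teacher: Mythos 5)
Your overall strategy is the same as the paper's: take $\RR$--invariant slices $\{c\}\times P_t$ over the mapping torus (any $J_0\in\jJ(\hH_0)$ makes these holomorphic, since $\xi_0$ is the vertical distribution), complete them by an explicit half-cylinder ansatz in the coordinates $(\theta,\rho,\phi)$ on $S^1\times\DD$, and compute the index via a trivialization of $\xi_0$ along the binding. But at the two places where the proposition actually has content, your argument is an assertion rather than a proof. First, the local model: you write down the ansatz $v(s,\tau)=(f(s),\tau,\phi_0,g(s))$ and then say the conformal factor of $J_0|_{\xi_0}$ and the profile functions are to be chosen ``in tandem'' so that $v$ is holomorphic with the right asymptotics and matches the slices near $\rho=1$ --- and you yourself flag this as ``the main obstacle.'' That step is the heart of the proposition and has to be carried out. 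In the paper it is done by fixing $J_0$ first, via $J_0\p_\rho=\beta(\rho)\bigl(-g(\rho)\p_\theta+f(\rho)\p_\phi\bigr)$, and observing that the Cauchy--Riemann equations for this ansatz then reduce to the ODEs \eqref{eqn:ODE}; under the additional standing assumption $f'(\rho)<0$ on $(0,1-\delta)$ the solution with $\rho(0)=1$ automatically flows to the binding orbit with $a$ and $\phi$ constant near $\rho=1$, so the matching with the slices and the positive asymptotic end come for free --- no simultaneous tuning of $J_0$ and the profile is needed, but some argument of this kind must be given.

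Second, the asymptotics and the index. You treat the binding orbits as (Morse--Bott) degenerate and then claim ``$\muCZ^\Phi=1$ for each (degenerate) binding orbit''; for a degenerate orbit the integer Conley--Zehnder index is not defined, and the index formula you invoke presupposes nondegenerate asymptotics (in the Morse--Bott case it must be modified). The paper resolves this by a further condition on the profile functions of $\lambda_0$ --- $f'/g'$ constant and irrational (and small) near $\rho=0$ --- which makes each binding orbit and all its covers nondegenerate with $\muCZ=1$ in the coordinate trivialization; this is also exactly what is needed later, since Proposition~\ref{prop:IFT} requires nondegenerate asymptotic orbits of odd Conley--Zehnder index. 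Relatedly, your claim that $c_1^\Phi\bigl(u^*T(\RR\times M)\bigr)=\chi(\dot{\Sigma})$ can be ``read off'' needs justification; the paper proves the equivalent statement $c_1(N_u)=0$ in \eqref{eqn:index} by exhibiting a nonvanishing section of the normal bundle ($X_0$ over $P_\psi$, $\p_\phi$ near the ends, constant in the asymptotic trivialization). With these two points repaired your argument would coincide with the paper's; as written, the existence of the holomorphic local model and the value of the Conley--Zehnder index are assumed rather than established.
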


We will prove this in \S\ref{sec:FEF}, and
recall the definition of the \emph{index} of
a $J$--holomorphic curve and its significance.  We'll then show that 
the index~$2$ curves obtained in the case $g=0$ survive
as a holomorphic open book under the
small perturbation from $\hH_0$ to~$\hH_\epsilon$, thus proving
the main theorem.

\subsection*{Acknowledgments}
Thanks to Dietmar Salamon and Sam Lisi for helpful conversations.

\section{Stable Hamiltonian structures}
\label{sec:SHS}

Stable Hamiltonian structures were introduced in \cite{SFTcompactness}
(although the name came somewhat later, 
cf.~\cites{EliashbergKimPolterovich,Eliashberg:SFT})
as a generalized setting for holomorphic curves in symplectizations that
accomodates both contact geometry and Floer homology, among other things.
Given a closed oriented $3$--manifold $M$, a stable Hamiltonian structure
$\hH = (\xi,X,\omega)$ is defined by
\begin{enumerate}
\item
a coorientable $2$--plane distribution $\xi \subset TM$,
\item
a vector field $X$ (the \emph{Reeb vector field}) that is everywhere 
transverse to $\xi$ and has flow preserving $\xi$,
\item
a closed $2$--form $\omega$ (the \emph{taming form}) 
such that $\omega|_{\xi} > 0$ and $\iota_X\omega \equiv 0$.
\end{enumerate}
One can associate to $\hH = (\xi,X,\omega)$ the unique $1$--form $\lambda$
such that $\ker\lambda = \xi$ and $\lambda(X) \equiv 1$, which then
automatically satisfies $d\lambda(X,\cdot) \equiv 0$.  Moreover,
$\iota_X\omega \equiv 0$ implies that the flow of $X$ preserves not only
$\xi$ but also its symplectic structure defined by $\omega|_{\xi}$.
Now, given $\hH$,
the so-called \emph{symplectization} $\RR \times M$ inherits a 
natural splitting $T(\RR \times M) = \RR \oplus \RR X \oplus \xi$,
and we use this to define a special class of almost complex 
structures $\jJ(\hH)$ on $\RR\times M$, so that for every $J \in \jJ(\hH)$,
\begin{enumerate}
\item
$J$ is invariant under the natural $\RR$--action on $\RR\times M$,
\item
$J \p_a = X$, where $\p_a$ denotes the unit vector in the $\RR$--direction,
\item
$J(\xi) = \xi$ and $J|_{\xi}$ is compatible with the symplectic structure
$\omega|_{\xi}$.
\end{enumerate}
Notice that the definition of $\jJ(\hH)$ depends on $\omega$ only up to
the \emph{conformally} symplectic structure that it induces on~$\xi$.
Thus one can always replace $\hH = (\xi,X,\omega)$ by 
$\hH' = (\xi,X,f\omega)$ for any smooth
function $f : M \to (0,\infty)$ with $df \wedge \omega = 0$; then
$\jJ(\hH) = \jJ(\hH')$, and the notions of finite energy $J$--holomorphic
curves defined via $\hH$ and $\hH'$ coincide.

If the distribution $\xi$ in $\hH = (\xi,X,\omega)$ is a
contact structure, then any choice of $\lambda$ with $\ker\lambda = \xi$
defines $X$ uniquely:
in standard contact geometric terms, it is the Reeb vector field determined
by the contact form $\lambda$.  In this case one can also take $d\lambda$
as a natural taming form, though as mentioned above, it is not the 
\emph{only} choice.

Symplectic fibrations over $S^1$ provide another natural source of
stable Hamiltonian structures.  Suppose $\pi : M \to S^1$ is a locally trivial
symplectic fibration whose standard fiber is a symplectic surface
$(S,\sigma)$, possibly with boundary, and denote the coordinate on
the base by~$t$.  The vertical subspaces form an integrable distribution
$\xi \subset TM$, and any symplectic connection can be defined so that
parallel transport is the flow of a vector field $X$ on $M$ with
$\pi_*X = \p_t$.  There is then a unique $2$--form $\omega$ on $M$ such
that $\omega|_{\xi} = \sigma$ and $\omega(X,\cdot) \equiv 0$, and we claim
that $\hH := (\xi,X,\omega)$ is a stable Hamiltonian structure on~$M$.  One
only has to verify that $\omega$ is closed; to see this, identify a 
neighborhood of any point in $M$ with $(-\epsilon,\epsilon) \times S$
via a symplectic local trivialization and denote the real coordinate by~$t$.  
Then $X$ can be written on $(-\epsilon,\epsilon) \times S$ in the form
$$
X(t,p) = \p_t + V(t,p)
$$
for some $t$--dependent locally Hamiltonian vector field $V_t = V(t,\cdot)$
on $S$, and $\sigma$ defines a $2$--form on $(-\epsilon,\epsilon) \times S$ with
$\p_t \in \ker\sigma$.  One can then check that $\omega$ has the form
$\sigma + \iota_V\sigma \wedge dt$, which is closed because 
$\iota_{V_t}\sigma$ is a closed
$1$--form on $S$ for every~$t$.  An important special case of this
construction is the mapping torus $S_\varphi$ for a symplectomorphism
$\varphi \in \Symp(S,\sigma)$: then the Floer homology of $(S,\varphi)$
can be viewed as a special case of symplectic field theory on
$(S_\varphi,\hH)$.

We shall now prove Proposition~\ref{prop:SHS} by constructing a stable Hamiltonian
structure that combines both of the examples above.  The resulting distribution
$\xi$ will be a \emph{confoliation} (cf.~\cite{EliashbergThurston}), 
which means that the associated
$1$--form satisfies $\lambda \wedge d\lambda \ge 0$; it is a contact structure
wherever this inequality is strict, and is a symplectic fibration
everywhere else.

Suppose $(P,\psi)$ is an abstract open book, and $\phi : P_\psi \to S^1$ is
its mapping torus, regarded as a fibration over $S^1$, with the vertical 
distribution denoted by $\xi_0 \subset T P_\psi$.  
For some neighborhood $\uU$ of each component of $\p P$, choose
$\delta > 0$ small and identify $\uU$ with
$[1-\delta,1 + \delta) \times S^1$ by a diffeomorphism
$$
(\rho,\theta) : \uU \to [1-\delta,1+\delta) \times S^1
$$
such that $d\theta \wedge d\rho$ defines the positive orientation of~$P$.
We can assume without loss of generality that $\psi$ is the identity
on~$\uU$, so a neighborhood of each boundary component of $P_\psi$ now looks
like $S^1 \times [1-\delta,1+\delta) \times S^1$ with coordinates
$(\phi,\rho,\theta)$.

We will also use the symbols $(\theta,\rho,\phi)$ to denote coordinates
on the solid torus $S^1 \times \DD$, where $\theta$ is assigned to
the first factor and
$(\rho,\phi) \in [0,1] \times S^1$ are polar coordinates on the closed
unit disk $\DD \subset \RR^2$.  Then there is a closed manifold
$$
M := P_\psi \cup_{\p P_\psi} \left( \bigcup S^1 \times \DD \right)
$$
defined by gluing a copy of $S^1 \times \DD$ to each boundary component
of $P_\psi$, with attaching maps defined to be the identity in the coordinates
$(\theta,\rho,\phi) \in S^1 \times [1-\delta,1] \times S^1$.  
Denoting the union of all the loops $\{ \rho = 0\}$ by $B$, we now have
a natural fibration $\pi : M \setminus B \to S^1$
defined by the $\phi$--coordinate.

Define $\lambda_0|_{P_\psi} = d\phi$, so $\xi_0 = \ker\lambda_0$.
Then we can extend $\xi_0$ over $M$ as a confoliation
by defining $\lambda_0$ for $\rho < 1 + \delta$ as
$$
\lambda_0 = f(\rho)\ d\theta + g(\rho)\ d\phi
$$
for some pair of smooth functions $f,g : [0,1+\delta] \to \RR$ such that
\begin{enumerate}
\item
The path $\rho \mapsto (f(\rho),g(\rho)) \in \RR^2$ moves through the
first quadrant from $(f(0),g(0)) = (c,0)$ for some $c > 0$ to 
$(f(1 + \delta),g(1 + \delta)) = (0,1)$ and is constant
for $\rho \in [1-\delta,1+\delta]$.
\item
The function 
$$
D(\rho) := f(\rho) g'(\rho) - f'(\rho) g(\rho)
$$
is positive on $(0,1-\delta)$, and $g''(0) > 0$.
\item
There is a small number $\delta' > \delta$ such that
$g(\rho) = 1$ for all $\rho \in [1 - \delta',1+\delta)$.
\item
The maps $\DD \to \RR$ defined by $(\rho,\phi) \mapsto f(\rho)$ and 
$(\rho,\phi) \mapsto g(\rho) / \rho^2$ are smooth at the origin.
\end{enumerate}
The last condition requires $f'(0) = g'(0) = 0$, and it ensures that
$\lambda_0$ is well defined and smooth at the coordinate singularity 
$\rho = 0$.  The second guarantees that $\lambda_0$ is contact for
$\rho < 1 - \delta$, and the significance of the third can be
seen by computing the Reeb vector field in this region:
we find
\begin{equation}
\label{eqn:Reeb}
X_0(\theta,\rho,\phi) = \frac{g'(\rho)}{D(\rho)} \p_\theta -
\frac{f'(\rho)}{D(\rho)} \p_\phi,
\end{equation}
which is identically equal to $\p_\phi$ for $\rho \in [1 - \delta',
1 - \delta)$.

It follows from a fundamental theorem of Giroux \cite{Giroux:openbook}
that every
closed contact $3$--manifold is isomorphic to $(M,\xi_\epsilon)$,
where $\xi_\epsilon$ is a small perturbation
of a confoliation~$\xi_0$ as constructed above.  
Let us make this perturbation explicit:
following \cite{Etnyre:lectures}, choose a smooth $1$--form $\eta$ on $P$ 
such that $\eta = (2 - \rho)\ d\theta$ near the boundary and $d\eta > 0$
everywhere.  Then if $\tau : [0,1] \to [0,1]$ is a smooth function that
equals $0$ for $t$ on a neighborhood of~$0$ and $1$ for 
$t$ on a neighborhood of~$1$, we define a $1$--form on $[0,1] \times P$ by
$$
\alpha = \tau(\phi) \psi^*\eta + \left[ 1 - \tau(\phi) \right] \eta,
$$
where $\phi$ is now the coordinate on $[0,1]$.  This extends to $\RR \times P$
and then descends to a smooth $1$--form on $P_\psi$ such that
$\alpha = (2 - \rho)\ d\theta$ near $\p P_\psi$ and $d\alpha|_{\xi_0} > 0$.
Then for sufficiently small $\epsilon > 0$,
$$
\lambda_\epsilon := d\phi + \epsilon \alpha
$$
is a contact form on $P_\psi$: indeed, $d\lambda_\epsilon =
\epsilon \ d\alpha$ is positive on $\xi_0$, and thus also on the
$C^\infty$--close perturbation $\xi_\epsilon := \ker\lambda_\epsilon$.
In the region $\rho \in [1 - \delta,1 + \delta)$ near any component
of $\p P_\psi$, we now have $\lambda_\epsilon = \epsilon (2-\rho)\ d\theta
+ d\phi$, thus we can extend $\lambda_\epsilon$ to a contact form on $M$
close to $\lambda_0$ by choosing $C^\infty$--small perturbations 
$(f_\epsilon,g_\epsilon)$ of $(f,g)$ such that
\begin{enumerate}
\item
$(f_\epsilon(\rho),g_\epsilon(\rho)) = (\epsilon(2-\rho),1)$ for
$\rho \in [1 - \delta,1+\delta)$,
\item
$g_\epsilon(\rho) = 1$ and $f'_\epsilon(\rho) < 0$ 
for all $\rho \in [1 - \delta',1 + \delta)$,
\item
$(f_\epsilon(\rho),g_\epsilon(\rho)) = (f(\rho),g(\rho))$ for
$\rho \in [0,1 - \delta']$.
\end{enumerate}
Now if $X_\epsilon$ denotes the Reeb vector field determined by 
$\lambda_\epsilon$, we have $X_\epsilon \equiv X_0$ on 
$\{ \rho < 1 - \delta\}$; in particular this equals $\p_\phi$ for
$\rho \in [1 - \delta',1 - \delta)$.

\begin{lemma}
\label{lemma:ReebField}
$X_0$ extends over $M$ as the $C^\infty$--limit of
$X_\epsilon$ as $\epsilon \to 0$.
\end{lemma}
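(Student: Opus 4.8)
The plan is to first make the extension of $X_0$ across the region not covered by \eqref{eqn:Reeb} explicit, and then compare $X_0$ with $X_\epsilon$ separately on the solid-torus region and on the mapping torus $P_\psi$. On $\{\rho < 1-\delta\}$ there is nothing to prove: since $(f_\epsilon,g_\epsilon) = (f,g)$ on $[0,1-\delta']$, while $g_\epsilon \equiv 1$ and $f'_\epsilon < 0$ on $[1-\delta',1-\delta)$, formula \eqref{eqn:Reeb} and the analogous formula for $\lambda_\epsilon$ give $X_\epsilon \equiv X_0$ there for every $\epsilon$, as already observed above. (One also uses condition~(4) here: writing $f$ and $g/\rho^2$ as smooth functions of $\rho^2$ shows that the right-hand side of \eqref{eqn:Reeb} extends smoothly across the coordinate singularity $\rho = 0$, so $X_0$ is a genuine smooth vector field on the solid tori.) Thus the only real content is on $P_\psi$, where $\lambda_\epsilon = d\phi + \epsilon\alpha$ and hence $d\lambda_\epsilon = \epsilon\, d\alpha$.

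The key observation is that $d\alpha|_{\xi_0} > 0$ makes the closed $2$-form $d\alpha$ nondegenerate on $\xi_0 = \ker d\phi$, so that $\ker d\alpha$ is a $1$-dimensional distribution on $P_\psi$ transverse to $\xi_0$. I would define $X_0$ on $P_\psi$ to be the unique vector field with $\iota_{X_0} d\alpha = 0$ and $d\phi(X_0) = 1$; it is smooth, being the solution of a pointwise-linear system with smooth coefficients of constant rank. Near $\p P_\psi$ one has $\alpha = (2-\rho)\,d\theta$, hence $d\alpha = d\theta\wedge d\rho$, whose kernel is spanned by $\p_\phi$, so $X_0 = \p_\phi$ on the collar $\{\rho \in [1-\delta,1+\delta)\}$. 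Since \eqref{eqn:Reeb} also equals $\p_\phi$ on $\{\rho\in[1-\delta',1-\delta)\}$, this mapping-torus definition of $X_0$ agrees on the overlap with the solid-torus one, and the two assemble into a smooth vector field on all of $M$ that restricts to \eqref{eqn:Reeb} on the solid tori.

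For the convergence on $P_\psi$, note that for $\epsilon \neq 0$ the Reeb condition $\iota_{X_\epsilon} d\lambda_\epsilon = 0$ says precisely $\iota_{X_\epsilon} d\alpha = 0$, so $X_\epsilon = c_\epsilon X_0$ for a smooth function $c_\epsilon$ on $P_\psi$, and $1 = \lambda_\epsilon(X_\epsilon) = c_\epsilon\bigl(1 + \epsilon\,\alpha(X_0)\bigr)$ forces $c_\epsilon = \bigl(1 + \epsilon\,\alpha(X_0)\bigr)^{-1}$. Because $\alpha(X_0)$ is a fixed smooth function on the compact manifold $P_\psi$, for small $\epsilon$ the denominator is bounded away from $0$ and $c_\epsilon \to 1$ in $C^\infty(P_\psi)$, hence $X_\epsilon \to X_0$ in $C^\infty(P_\psi)$. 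On the overlap collar $\alpha(X_0) = (2-\rho)\,d\theta(\p_\phi) = 0$, so $X_\epsilon = \p_\phi = X_0$ there exactly, consistent with the solid-torus computation; combined with $X_\epsilon \equiv X_0$ on $\{\rho < 1-\delta\}$ this gives $X_\epsilon \to X_0$ in $C^\infty(M)$.

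I expect the only genuine subtlety to be conceptual rather than computational: the limiting field $X_0$ on the interior of $P_\psi$ is \emph{not} the naive candidate $\p_\phi$ but the kernel field of $d\alpha$ normalized by $d\phi$, and one has to check that it nevertheless coincides with $\p_\phi$ near the binding so as to glue onto the contact Reeb field \eqref{eqn:Reeb}. Everything else --- smoothness of \eqref{eqn:Reeb} at $\rho = 0$ via condition~(4), and the matching of the two coordinate formulas for $X_\epsilon$ along the gluing region --- is routine.
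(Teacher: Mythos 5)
Your proof is correct and follows essentially the same route as the paper: on $P_\psi$ the Reeb condition for $\lambda_\epsilon = d\phi + \epsilon\alpha$ forces $X_\epsilon$ to lie in $\ker d\alpha$, which is independent of $\epsilon$, so $X_\epsilon$ converges to the $d\phi$--normalized kernel field of $d\alpha$, which equals $\p_\phi$ near $\p P_\psi$ and hence glues smoothly onto \eqref{eqn:Reeb}. Your explicit rescaling factor $c_\epsilon = (1+\epsilon\,\alpha(X_0))^{-1}$ just makes the $C^\infty$--convergence more transparent than the paper's one-line version of the same argument.
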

\begin{proof}
On $P_\psi$, the direction of $X_\epsilon$ is determined by
$\ker d\lambda_\epsilon = \ker d\alpha$, and is therefore independent of
$\epsilon$, so $X_\epsilon$ converges as $\epsilon \to 0$ to the unique
vector field that spans $\ker d\alpha$ and satisfies $d\phi(X_0) \equiv 1$.
In a neighborhood of $\p P_\psi$ this is simply $\p_\phi$, so it fits
together smoothly with \eqref{eqn:Reeb}.
\end{proof}

To complete the proof of Proposition~\ref{prop:SHS}, choose a smooth function
$h : [1 - \delta',1 - \delta] \to (0,\infty)$ which equals
$-f'(\rho)$ for $\rho$ near $1 - \delta'$ and $1$ for $\rho$ near
$1 - \delta$.  Then a taming form for $\xi_0$ and $X_0$ can be defined by
$$
\omega_0 = \begin{cases}
d\alpha & \text{ on $P_\psi$,}\\
h(\rho) \ d\theta \wedge d\rho & 
\text{ for $\rho \in [1 - \delta',1 - \delta)$,}\\
d\lambda_0 & \text{ for $\rho < 1 - \delta'$,}
\end{cases}
$$
making $\hH_0 := (\xi_0,X_0,\omega_0)$ into a stable Hamiltonian structure.
Since $X_\epsilon$ and $X_0$ are everywhere colinear and $\xi_\epsilon$
is assumed close to $\xi_0$, $\omega_0$ also furnishes a taming form
for $\xi_\epsilon$ and $X_\epsilon$, defining $\hH_\epsilon :=
(\xi_\epsilon,X_\epsilon,\omega_0)$.  Observe that $\omega_0 = F_\epsilon
\ d\lambda_\epsilon$ for a function $F_\epsilon$ with
$d F_\epsilon \wedge d\lambda_\epsilon = 0$.

\begin{remark}
The data $\hH_0 = (\xi_0,X_0,\omega_0)$ give $P_\psi$
the structure of a symplectic fibration, where the symplectic form on 
the fibers is $d\alpha|_{\xi_0}$, and $X_0$ defines a symplectic connection.
\end{remark}

\begin{remark}
The taming form $\omega_0$ will not play any role in the arguments to
follow, but it is important in further applications, 
cf.~\cite{Wendl:openbook2}.  In particular, one needs it to obtain
compactness results for a sequence of $J_\epsilon$--holomorphic curves
with $J_\epsilon \in \jJ(\hH_\epsilon)$ as $\epsilon \to 0$.
\end{remark}

\section{Finite energy foliations}
\label{sec:FEF}

Let us now apply Proposition~\ref{prop:SHS} to prove 
Proposition~\ref{prop:J0openbook} and the main theorem.  We begin by
choosing an appropriate $J_0 \in \jJ(\hH_0)$ and constructing
a foliation of $\RR \times M$ by $J_0$--holomorphic curves.  On $P_\psi$
this is easy: $\xi_0$ is tangent to the fibers and is preserved by any
admissible complex structure, thus for any fiber $F \subset P_\psi$ of the
mapping torus, $\{\text{const}\} \times F \subset \RR \times P_\psi$ 
is an embedded holomorphic
curve for any~$J_0$.  The task is therefore to find a foliation by
holomorphic curves in $\RR \times (S^1 \times \DD)$, which have a
puncture asymptotic to the orbit at $\{\rho = 0\}$ and fit together
smoothly with the fibers $\{\text{const}\} \times F$.  For a sufficiently
symmetric choice of the data, this is merely a matter of writing down the
Cauchy-Riemann equations and solving them: we have $\lambda_0 = 
f\ d\theta + g\ d\phi$ and $X_0 = \frac{g'}{D} \p_\theta - \frac{f'}{D}
\p_\phi$, which reduce to $d\phi$ and $\p_\phi$ respectively near
$\p(S^1 \times \DD)$.  Choose vector fields 
$$
v_1 = \p_\rho,
\qquad
v_2 = -g(\rho) \p_\theta + f(\rho) \p_\phi
$$
to span $\xi_0$, along with a smooth function $\beta(\rho) > 0$, and define $J_0 \in \jJ(\hH_0)$
at $(\theta,\rho,\phi) \in S^1 \times \DD$ by the condition
$J_0 v_1 = \beta(\rho) v_2$.
The function $\beta$ can be chosen so that this definition of $J_0$
extends smoothly to $\rho = 0$, and we shall assume $\beta(\rho) = 1$
for $\rho$ outside a neighborhood of~$0$.
Then one can compute (cf.~\cite{Wendl:OTfol}*{\S 4.2}) that in
conformal coordinates $(s,t)$, a map
$$
u(s,t) = (a(s,t),\theta(s,t),\rho(s,t),\phi(s,t))
$$ 
with $\rho(s,t) < 1 - \delta$ is $J_0$--holomorphic if and only 
if it satisfies the equations
\begin{equation*}
\begin{aligned}
a_s &= f\theta_t + g\phi_t \quad\quad\quad\quad &
\rho_s &= \frac{1}{\beta D} (f'\theta_t + g'\phi_t) \\
a_t &= -f\theta_s - g\phi_s &
\rho_t &= -\frac{1}{\beta D} (f'\theta_s + g'\phi_s) \\
\end{aligned}
\end{equation*}
where $f$, $g$, $D$ and $\beta$ are all functions of $\rho(s,t)$.
If $\rho(s,t) \ge 1 - \delta'$, then $g'(\rho) = 0$ and $g(\rho) = 1 =
\beta(\rho)$, so the two equations on the right become
$$
\rho_s = - \theta_t,
\qquad
\rho_t = \theta_s.
$$
There are then solutions of the form
$$
u : [0,\infty) \times S^1 \to \RR \times (S^1 \times \DD) : 
(s,t) \mapsto (a(s),t,\rho(s),\phi_0)
$$
for any constant $\phi_0 \in S^1$, where $a(s)$ and $\rho(s)$ solve the
ordinary differential equations
\begin{equation}
\label{eqn:ODE}
\frac{da}{ds} = f(\rho),
\qquad
\frac{d\rho}{ds} =
\begin{cases}
\frac{f'(\rho)}{\beta(\rho) D(\rho)} & \text{ if $\rho < 1 - \delta$,}\\
-1                           & \text{ if $\rho \ge 1 - \delta'$.}
\end{cases}
\end{equation}
Let us now add the following standing assumptions for $f$ and~$g$:
\begin{enumerate}
\item
$f'(\rho) < 0$ for all $\rho \in (0,1-\delta)$.
\item
$f'(\rho) / g'(\rho)$ is a constant irrational number close to zero
for sufficiently small~$\rho > 0$.
\end{enumerate}
Observe that one can impose these conditions and in addition require
$f(0) > 0$ to be arbitrarily small, the latter being the period
of the Reeb orbit at $\rho = 0$ (see Remark~\ref{remark:smallPeriods}).
Now by a straightforward computation of the linearized Reeb flow,
the second assumption ensures that this orbit and all its
multiple covers will be nondegenerate, and these are the only periodic
orbits in some neighborhood.  By the first assumption,
the unique solution to
\eqref{eqn:ODE} with $\rho(0) = 1$ and any given value of $a(0) \in \RR$
yields a $J_0$--holomorphic half-cylinder $u$ which is 
positively asymptotic
as $s \to \infty$ to the embedded orbit at $\{\rho = 0 \}$
and has $a(s,t)$ and $\phi(s,t)$ both constant near the boundary.  The
image can therefore be attached smoothly to the holomorphic
fiber $\{\text{const}\} \times F \subset \RR \times P_\psi$, giving
an extension of the latter to an embedded $J_0$--holomorphic curve 
in $\RR\times M$, with no boundary and with punctures asymptotic to the
binding orbits of the open book; the collection of all these curves
defines the $\RR$--invariant foliation of $\RR \times M$ shown in
Figure~\ref{fig:J0holomorphic}.  This proves Proposition~\ref{prop:J0openbook}
except for the index calculation (see Equation~\eqref{eqn:index} below,
and the ensuing discussion).

\begin{figure}
\includegraphics{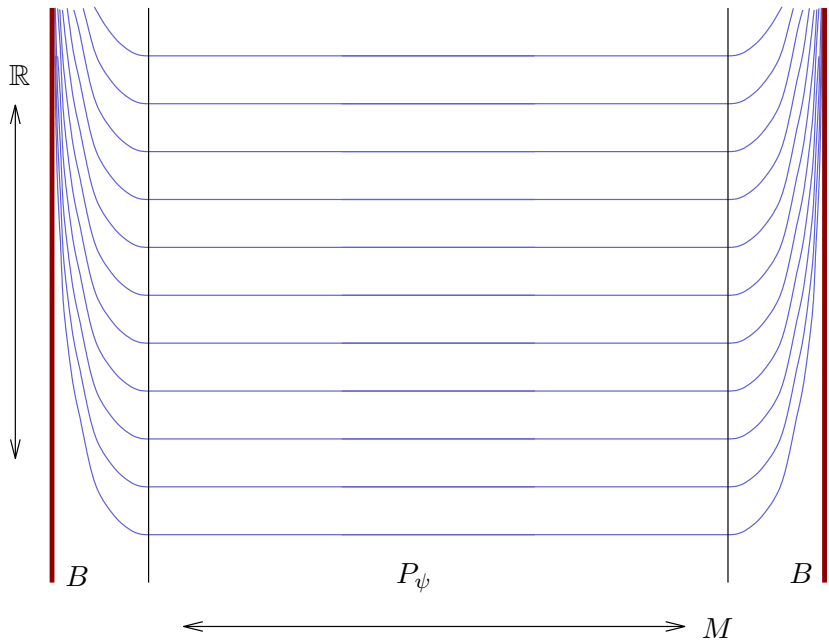}
\caption{\label{fig:J0holomorphic} 
The $J_0$--holomorphic curves that foliate $\RR\times M$ and project to
any given open book decomposition of~$M$.  The $\RR$--component of each
curve is constant for the part within the mapping torus $P_\psi$, and
approaches $+\infty$ near the binding.}
\end{figure}

Having constructed a foliation by $J_0$--holomorphic curves for the data
$\hH_0 = (\xi_0,X_0,\omega_0)$, we now wish to deform the foliation as
$\hH_0$ is perturbed to the contact data
$\hH_\epsilon = (\xi_\epsilon,X_\epsilon,\omega_\epsilon)$ for sufficiently
small~$\epsilon$.  The crucial consequence of Proposition~\ref{prop:SHS} is 
that we can pick an
almost complex structure $J_\epsilon \in \jJ(\hH_\epsilon)$ that is
$C^\infty$--close to $J_0$.  By the construction of $X_\epsilon$ and
$\xi_\epsilon$, we can also assume $J_\epsilon$ equals $J_0$ on a neighorhood
of $\RR \times B \subset \RR\times M$,
thus the perturbation of our foliation will be a straightforward application
of the implicit function theorem for holomorphic curves.  This is however
the point where we'll need the open book to be \emph{planar}, as otherwise
the virtual dimension of the moduli space we've constructed turns out to
be too small.

Let us compute this dimension.
Our assumptions on $f$ and $g$ for $\rho$ close to zero imply that each of
the binding orbits $\gamma \subset B$ has Conley-Zehnder index
$\muCZ(\gamma) = 1$ with respect to the natural trivialization defined by
the coordinates.  Let $u : \dot{\Sigma} \to \RR \times M$ denote one
of the holomorphic curves in our foliation.  Since it is embedded, it sits
in a moduli space whose virtual dimension (the \emph{index} of $u$) equals
the Fredholm index of the linearized normal Cauchy-Riemann operator,
cf.~\cite{Wendl:automatic}.  This index is
\begin{equation}
\label{eqn:index}
\ind(u) = \chi(\dot{\Sigma}) + 2 c_1(N_u) + \sum_{\gamma \subset B} 
\muCZ(\gamma),
\end{equation}
where $c_1(N_u)$ denotes the
\emph{relative} first Chern number of the normal bundle of $u$ with respect
to the natural trivializations of $\xi_0$ (which equals $N_u$ at the asymptotic
limits) defined by our coordinates 
near~$B$.  Assume the pages (and thus also $\dot{\Sigma}$) have genus~$g$.
We claim now that $c_1(N_u) = 0$, and thus \eqref{eqn:index} implies 
$$
\ind(u) = 2 - 2g.
$$
Indeed, since $u$ is always 
transverse to the subspaces spanned by $\p_a$ and $X_0$,
one can define the normal bundle to consist of these spaces in $P_\psi$,
and extend it into the neighborhood of $B$ so that
it always contains $\p_\phi$: there is thus a nonzero section of $N_u$
which looks like $X_0$ over $P_\psi$ and $\p_\phi$ near the ends, and
the latter is constant in the asymptotic trivialization.

Assume from now on that $g=0$, so the $J_0$--holomorphic curves
constructed above have index~$2$.  We will apply the following strong
version of the implicit function theorem, which is valid only for a special
class of punctured holomorphic spheres in dimension four; proofs of the
following (in slightly more general versions) may be found in
\cites{Wendl:thesis,Wendl:BP1}, and a special
case appeared already in \cite{HWZ:props3}.

\begin{prop}[Strong implicit function theorem]
\label{prop:IFT}
Assume $M$ is any closed $3$--manifold with stable Hamiltonian structure
$\hH = (\xi,X,\omega)$, $J \in \jJ(\hH)$, and
$$
u = \left(u^\RR,u^M\right) : \dot{\Sigma} \to\RR\times M
$$
is a punctured $J$--holomorphic curve with the following properties:
\begin{enumerate}
\item $u$ is embedded, and asymptotic to distinct simply covered periodic
orbits at each puncture.
\item $\dot{\Sigma}$ has genus zero.
\item All asymptotic orbits are nondegenerate with odd Conley-Zehnder index.
\item $\ind(u) = 2$.
\end{enumerate}
Then $u$ is Fredholm regular and belongs to a
smooth $2$--parameter family of embedded curves
$$
u_{(\sigma,\tau)} = \left(u_\tau^\RR + \sigma,u_\tau^M\right) : 
\dot{\Sigma} \to \RR\times M,
\qquad (\sigma,\tau) \in \RR\times (-1,1)
$$
with $u_{(0,0)} = u$,
whose images foliate an open neighborhood of $u(\dot{\Sigma})$
in $\RR \times M$.  Moreover, the maps $u_\tau^M : \dot{\Sigma} \to M$
are all embedded and foliate an open neighborhood of $u^M(\dot{\Sigma})$
in~$M$.
\end{prop}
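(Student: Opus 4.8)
The plan is to derive the whole statement from the automatic transversality phenomenon in dimension four, supplemented by the intersection theory and the asymptotic analysis of punctured holomorphic curves. Since $u$ is embedded it is injective, in particular somewhere injective, and it is an immersion — a critical point of $du$ would make $u$ a nontrivial branched cover nearby, hence noninjective — so $u$ carries a genuine normal bundle $N_u$ with linearized normal Cauchy--Riemann operator $\mathbf{D}_u^N$ of Fredholm index $\ind(u) = 2$. First I would establish Fredholm regularity of $u$ by appealing to the dimension-four automatic transversality theorem (\cite{Wendl:automatic}, a special case of which already appears in \cite{HWZ:props3}): hypotheses (2) and (3) — genus zero and all asymptotic orbits nondegenerate with \emph{odd} Conley--Zehnder index — are exactly the conditions that keep the normal Chern data obstructing surjectivity of $\mathbf{D}_u^N$ strictly below $\ind(u) = 2$, and since in addition $du$ has no zeros the criterion is met and $\mathbf{D}_u^N$ is surjective.

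Given regularity, the moduli space $\mM$ of finite energy genus-zero $J$--holomorphic curves asymptotic to the same (distinct, simply covered, nondegenerate) orbits is a smooth $2$--manifold near $u$. The $\RR$--translation action on the symplectization is free and proper near $u$, because $u$ has punctures asymptotic to orbits and hence $u^\RR$ is nonconstant; so a neighborhood of $u$ in $\mM$ is diffeomorphic to $\RR \times (-1,1)$, the $\RR$--factor recording the translation parameter $\sigma$ and the interval a slice transverse to the $\RR$--orbit through $u$. This produces a smooth family $\tau \mapsto u_\tau = (u_\tau^\RR, u_\tau^M)$ with $u_0 = u$, and then $u_{(\sigma,\tau)} := (u_\tau^\RR + \sigma,\, u_\tau^M)$ is the family claimed in the statement.

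The substantive step is upgrading this family to a foliation. Embeddedness of the nearby $u_{(\sigma,\tau)}$ I would get from the relative adjunction formula for punctured curves (\cite{Wendl:thesis}): for a somewhere injective curve, embeddedness is equivalent to the vanishing of a homotopy-invariant combination of the normal Chern number, the algebraic double-point count, and extremal asymptotic winding numbers; $u$ realizes this extremal case, so all sufficiently nearby curves do too. The same invariants give $\windpi(u_\tau) = 0$, which forces the projection $u_\tau^M$ to be an immersion, and injectivity of $u_\tau^M$ then follows as well. For pairwise disjointness: with $\tau$ fixed and $\sigma \ne \sigma'$ it is immediate from nonconstancy of $u_\tau^\RR$ together with injectivity of $u_\tau^M$; in general, the Siefring intersection number between two distinct members of the family is a homotopy invariant which the genus-zero, odd-index data pin down to zero, so positivity of intersections makes distinct members geometrically disjoint. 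Finally, the $\tau$--derivative of the family is represented by a section $\eta$ of $N_u$ lying in $\ker \mathbf{D}_u^N$ and not proportional to the section generated by $\RR$--translation; by the similarity principle $\eta$ has only isolated zeros, each of positive order, and their total count plus the nonnegative asymptotic contributions (controlled by the winding estimates of Hofer--Wysocki--Zehnder) equals the vanishing normal Chern number, so $\eta$ has no zeros at all. Hence the evaluation maps $(\sigma,\tau,z) \mapsto u_{(\sigma,\tau)}(z)$ and $(\tau,z) \mapsto u_\tau^M(z)$ are local diffeomorphisms, which together with disjointness exhibits the images as foliations of open neighborhoods of $u(\dot\Sigma)$ in $\RR \times M$ and of $u^M(\dot\Sigma)$ in $M$, with the asymptotic orbit cylinders appearing as the limiting leaves.

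I expect the last step to be the main obstacle: turning a smooth family into a genuine foliation means ruling out focal points of the family (zeros of $\eta$) and self-intersections among its members, and both come down to sharp control of the asymptotic winding of $u$ and of elements of $\ker\mathbf{D}_u^N$ at the punctures. This is precisely where the hypotheses that $\dot\Sigma$ has genus zero, that the orbits are simply covered, and that their Conley--Zehnder indices are odd all enter, and it is also the reason there is no analogous statement for curves of positive genus.
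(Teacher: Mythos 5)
Your proposal is correct and follows essentially the same route as the paper, which itself only cites \cite{HWZ:props3} and the author's references for the full proof and then sketches exactly this mechanism: the relation $2c_1(N_u) = \ind(u) - 2 + 2g + \#\Gamma_0 = 0$ under the genus-zero, odd-index, index-$2$ hypotheses forces sections in the kernel of the normal Cauchy--Riemann operator to be zero free, yielding both Fredholm regularity (automatic transversality) and the local foliation, with the statement in $M$ following from $\RR$--invariance. Your use of Siefring intersection numbers and the adjunction formula for disjointness and embeddedness of nearby curves is a slightly heavier packaging of what the paper gets directly from the observation that nearby curves are push-offs of zero-free normal sections, but it is the same circle of ideas and equally valid.
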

Here, \emph{Fredholm regular}\footnote{The term \emph{unobstructed} also appears
often in the literature as a synonym.} 
means the moduli space of (unparametrized) 
$J$--holomorphic curves
near $u$ can be described as the zero set of a nonlinear
Cauchy-Riemann type operator whose linearization at $u$ is a 
surjective Fredholm operator.
The usual implicit function theorem in a Banach manifold setting then implies
that this moduli space is a smooth $2$--manifold near $u$, thus the
$2$--parameter family obtained in the proposition is unique up to changes
of parametrization.  The reason for the nice geometric structure of the
family is that if $u$ is embedded, then all nearby curves can be
described via sections of the normal bundle $N_u$ which must
satisfy a linear Cauchy-Riemann type equation.  Since $\dim M = 3$, $N_u$
is a complex line bundle, so the zeroes of its sections (with prescribed
asymptotic behavior) can
be counted and related to the same homotopy invariant quantities that
figure into the index formula.  Notably, the integer $c_1(N_{u})$,
defined as a relative Chern number with respect
to certain special asymptotic trivializations
(cf.~\cite{Wendl:automatic}), satisfies the relation
$$
2 c_1(N_u) = \ind(u) - 2 + 2g + \#\Gamma_0,
$$
where $g$ is the genus of $\dot{\Sigma}$ and $\Gamma_0$ is the set of
punctures at which the asymptotic orbit has even Conley-Zehnder index.
Thus in the present case, this number vanishes and implies that nontrivial
sections satisfying the relevant Cauchy-Riemann type equation must be
zero free.  It follows that this solution set can have dimension at most~$2$,
so the linearized operator has no codimension, and all nearby curves are
push-offs of zero free sections and hence disjoint.  The corresponding
statement about the projected maps $u_\tau^M : \dot{\Sigma} \to M$ follows
easily from this due to $\RR$--invariance.  We should note one
more important property of Fredholm regularity which will be useful presently: 
it allows one to apply the
implicit function theorem to deform the family $u_{(\sigma,\tau)}$
under small perturbations of~$J$.

We apply the above machinery as follows.
Let $\mM_0$ denote the connected $2$--dimensional moduli space of curves
that form the $J_0$--ho\-lo\-mor\-phic open book; dividing by the
natural $\RR$--action, we have $\mM_0 / \RR \cong S^1$.
For some small $\epsilon_0 > 0$, assume $\{J_\tau\}_{\tau \in 
(-\epsilon_0,\epsilon_0)}$ is a smooth family of almost complex
structures such that $J_\tau = J_0$ for $\tau \le 0$,
$J_\tau \in \jJ(\hH_\tau)$ for $\tau > 0$ and $J_\tau \equiv J_0$ in a 
neighborhood of $\RR \times B$.  Now define the moduli space
$$
\widehat{\mM} = \{ (\tau,u)\ |\ \tau \in (-\epsilon_0,\epsilon_0),
\ \text{$u$ is a finite energy $J_\tau$--holomorphic curve} \},
$$
let $\mM$ denote the connected component of $\widehat{\mM}$ containing
$\{0\} \times \mM_0$, and for each $\tau \in (-\epsilon_0,\epsilon_0)$ 
define the subset 
$\mM_\tau := \{ (\tau,u) \in \mM \}$.  An argument by positivity
of intersections as in \cites{HWZ:tight3sphere,ACH} shows that all
curves $u \in \mM_\tau$ are embedded, and no two curves in $\mM_\tau$
(for fixed $\tau$) can intersect.  Moreover, they all have index~$2$
and genus zero, and have asymptotic orbits with exclusively \emph{odd}
Conley-Zehnder index.  Proposition~\ref{prop:IFT} now implies that
each $\mM_\tau$
is a smooth $2$--manifold and $\mM_\tau / \RR$ is a $1$--manifold that
locally foliates $M \setminus B$.
It also follows that $\mM$ is a smooth $3$--manifold.

We claim that for $\tau > 0$
sufficiently small, $\mM_\tau / \RR$ is diffeomorphic to
$S^1$ and hits every point in $M \setminus B$.
To see this, pick a loop $\ell \subset M \setminus B$ that passes
once transversely through the projection of every curve in $\mM_0$.
This defines an evaluation map
$$
\ev : \mM_0 / \RR \to \ell,
$$
which is a diffeomorphism.  Moreover since every curve in $\mM$ can be
compactified to a surface with boundary in~$B$, the algebraic intersection
number of $u \in \mM$ with $\ell$ is invariant, thus
every curve in $\mM$
must intersect~$\ell$, and the aforementioned positivity of intersections
argument implies that for any given $p \in \ell$ and $\tau \in 
(-\epsilon_0,\epsilon_0)$, there is at most one element of $\mM_\tau / \RR$
with $p$ in its image.  Now for every $u \in \mM_0 / \RR$,
pick an open neighborhood $(0,u) \in \uU_u \subset \mM / \RR$ 
sufficiently small
so that all curves in $\uU_u$ hit $\ell$ transversely, exactly once.
The evaluation map $\ev : \uU_u \to \ell$ is therefore well defined,
and writing the projection $\pi_1 : \mM / \RR \to (-\epsilon_0,\epsilon_0) :
(\tau,u) \mapsto \tau$, the map
$$
\pi_1 \times \ev : \uU_u \to (-\epsilon_0,\epsilon_0) \times \ell
$$
has nonsingular derivative at $(0,u)$, hence we can find an open neighborhood
$\ell_u \subset \ell$ of $\ev(u)$ and a number $\epsilon_u \in (0,\epsilon_0)$ such
that $(-\epsilon_u,\epsilon_u) \times \ell_u$ is in the image of $\uU_u$ under
$\pi_1 \times \ev$.
The sets $\ell_u$ for all $u \in \mM_0 / \RR$ now form an open covering of
the compact set $\ell$, so we can pick finitely many curves
$u_1,\ldots,u_N \in \mM_0$ such that $\ell = \ell_{u_1} \cup \ldots \cup \ell_{u_N}$,
and set $\epsilon := \min_j \{ \epsilon_{u_j} \} > 0$.  Now the image
of $\uU_{u_1} \cup \ldots \cup \uU_{u_N}$ under $\pi_1 \times \ev$
contains $(-\epsilon,\epsilon) \times \ell$, hence for any 
$\tau \in (0,\epsilon)$ and $p \in \ell$, there is a curve in $\mM_\tau / \RR$
passing transversely through $\ell$ at~$p$.  It follows that the
evaluation map extends to $\mM_\tau / \RR$ as a diffeomorphism
$\ev : \mM_\tau / \RR \to \ell$.
Finally, we observe from Proposition~\ref{prop:IFT} that the set
$$
\{ p \in M \setminus B \ |\ 
\text{$p$ is in the image of some $u \in \mM_\tau / \RR$} \}
$$
is open, and it is also closed since $\mM_\tau / \RR$ is compact, thus
it is all of $M\setminus B$.

One can perturb $\lambda_\epsilon$ further
so that it becomes nondegenerate, and repeating
the argument above then gives the desired foliation by holomorphic curves 
for a nondegenerate contact form.

\begin{bibdiv}
\begin{biblist}
\bibselect{wendlc}
\end{biblist}
\end{bibdiv}

\end{document}